\newtheorem{theorem}{Theorem}[section]
\newtheorem{lemma}[theorem]{Lemma}
\newenvironment{proof}[1][Proof]{\begin{trivlist}
\item[\hskip \labelsep {\bfseries #1}]}{\end{trivlist}}
\newenvironment{definition}[1][Definition]{\begin{trivlist}
\item[\hskip \labelsep {\bfseries #1}]}{\end{trivlist}}
\newenvironment{conjecture}[1][Conjecture]{\begin{trivlist}
\item[\hskip \labelsep {\bfseries #1}]}{\end{trivlist}}
\newenvironment{remark}[1][Remark]{\begin{trivlist}
\item[\hskip \labelsep {\bfseries #1}]}{\end{trivlist}}
\newenvironment{notation}[1][Notation]{\begin{trivlist}
\item[\hskip \labelsep {\bfseries #1}]}{\end{trivlist}}
\newcommand{\qed}{\nobreak \ifvmode \relax \else
      \ifdim\lastskip<1.5em \hskip-\lastskip
      \hskip1.5em plus0em minus0.5em \fi \nobreak
      \vrule height0.75em width0.5em depth0.25em\fi}
\author{Péter Burcsi\thanks{The research of P. Burcsi is  supported by the  by the European Union and co-financed by the European Social Fund (grant agreement no. T\'AMOP 4.2.1/B-09/1/KMR-2010-0003). },
Dániel T. Nagy\\
Eötvos Loránd University, Budapest}
\title{The method of double chains for largest families with excluded subposets}
\begin{document}

\maketitle

\begin{abstract}
For a given finite poset $P$, $La(n,P)$ denotes the largest size of a family $\mathcal{F}$ of subsets of $[n]$ not containing $P$ as a weak subposet. We exactly determine $La(n,P)$ for infinitely many $P$ posets. These posets are built from seven base posets using two operations. For arbitrary posets, an upper bound is given for $La(n,P)$ depending on $|P|$ and the size of the longest chain in $P$. To prove these theorems we introduce a new method, counting the intersections of $\mathcal{F}$ with double chains, rather than chains.
\end{abstract}

\section{Introduction}

Let $[n]=\{1,2,\dots, n\}$ be a finite set. We investigate families $\mathcal{F}$ of subsets of $[n]$ avoiding certain configurations of inclusion.

\begin{definition}
Let $P$ be a finite poset, and $\mathcal{F}$ be a family of subsets of $[n]$. We say that $P$ is contained in $\mathcal{F}$ if there is an injective mapping $f:P\rightarrow \mathcal{F}$ satisfying $a<_p b \Rightarrow f(a)\subset f(b)$ for all $a,b\in P$. $\mathcal{F}$ is called $P$-free if $P$ is not contained in it.

Let $La(n,P)=\{\max |\mathcal{F}|~ \mid~ \mathcal{F} ~\text{contains no} ~P\}$
\end{definition}
Note that we do not want to find $P$ as an induced subposet, so the subsets of $\mathcal{F}$ can satisfy more inclusions than the elements of the poset $P$.

We are interested in determining $La(n,P)$ for as many posets as possible. The first theorem of this kind was proved by Sperner. Later it was generalized by Erdős.

\begin{theorem} [Sperner] {\rm \cite{Sper}}
Let $\mathcal{F}$ be a family of subsets of $[n]$, with no member of $\mathcal{F}$ being the subset of an other one. Then
\begin{equation}
|\mathcal{F}|\leq {n \choose \lfloor n/2 \rfloor}
\end{equation}
\end{theorem}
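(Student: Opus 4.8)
The plan is to prove the bound by Lubell's chain-counting argument, which also foreshadows the double-chain method introduced later in the paper. Work in the Boolean lattice $2^{[n]}$ and consider its maximal chains, i.e.\ sequences $\emptyset = C_0 \subset C_1 \subset \cdots \subset C_n = [n]$ with $|C_i| = i$; there are exactly $n!$ of them, one for each linear ordering of the ground set $[n]$. I would count in two ways the number of incident pairs $(F,\mathcal{C})$ with $F \in \mathcal{F}$ and $F$ lying on the maximal chain $\mathcal{C}$.

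First, fix a maximal chain $\mathcal{C}$. Its members are totally ordered by inclusion, while no member of $\mathcal{F}$ is a subset of another member of $\mathcal{F}$; hence $\mathcal{C}$ passes through at most one element of $\mathcal{F}$, and the number of incident pairs is at most $n!$. Second, fix $F \in \mathcal{F}$ with $|F| = k$. A maximal chain through $F$ is determined by choosing a maximal chain from $\emptyset$ to $F$ inside $2^{F}$ (there are $k!$ of these) together with an independent maximal chain from $F$ to $[n]$ (there are $(n-k)!$ of these), so $F$ lies on exactly $k!\,(n-k)!$ maximal chains. Summing over all $F \in \mathcal{F}$ and comparing the two counts gives
$$\sum_{F \in \mathcal{F}} |F|!\,(n - |F|)! \leq n!,$$
which after dividing by $n!$ is the Lubell--Yamamoto--Meshalkin inequality $\sum_{F \in \mathcal{F}} \binom{n}{|F|}^{-1} \leq 1$.

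To finish, I would invoke the unimodality of a single row of Pascal's triangle, namely $\binom{n}{k} \leq \binom{n}{\lfloor n/2 \rfloor}$ for all $0 \le k \le n$. Replacing each $\binom{n}{|F|}$ in the denominator by the larger quantity $\binom{n}{\lfloor n/2 \rfloor}$ yields
$$\frac{|\mathcal{F}|}{\binom{n}{\lfloor n/2 \rfloor}} = \sum_{F \in \mathcal{F}} \frac{1}{\binom{n}{\lfloor n/2 \rfloor}} \leq \sum_{F \in \mathcal{F}} \frac{1}{\binom{n}{|F|}} \leq 1,$$
and rearranging gives $|\mathcal{F}| \leq \binom{n}{\lfloor n/2 \rfloor}$, as claimed.

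The only genuinely delicate point is the first double-counting step: recognizing that the antichain hypothesis is precisely what forces each maximal chain to meet $\mathcal{F}$ in at most one set. The remaining ingredients — counting the maximal chains through a fixed $k$-set and the unimodality of binomial coefficients — are routine. An alternative would be to use Dilworth's theorem to partition $2^{[n]}$ into $\binom{n}{\lfloor n/2 \rfloor}$ chains (for instance via a symmetric chain decomposition) and note that $\mathcal{F}$ meets each chain at most once, but the chain-counting proof is cleaner and closer in spirit to the techniques developed in the sequel.
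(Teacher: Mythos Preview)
Your argument is correct and is exactly the Lubell chain-counting proof the paper uses. The paper does not give a separate proof of Sperner's theorem, but the proof it writes out for Theorem~\ref{erdospath} (the Erd\H{o}s $k$-chain generalization) specializes at $k=1$ to precisely your double count of pairs $(\text{chain},F)$, the LYM inequality, and the unimodality estimate.
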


\begin{theorem}[Erdős] {\rm \cite{Erd}} \label{erdospath}
Let $\mathcal{F}$ be a family of subsets of $[n]$, with no $k+1$ members of $\mathcal{F}$ satisfying $A_1\subset A_2\subset\dots \subset A_{k+1}$ $(k\leq n)$. Then $|\mathcal{F}|$ is at most the sum of the $k$ biggest binomial coefficients belonging to $n$. The bound is sharp, since it can be achieved by choosing all subsets $F$ with
$\lfloor\frac{n-k+1}{2}\rfloor\leq |F| \leq \lfloor\frac{n+k-1}{2}\rfloor$.
\end{theorem}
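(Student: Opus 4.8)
The plan is to derive the upper bound from a \emph{symmetric chain decomposition} of the Boolean lattice $2^{[n]}$, and to obtain a matching construction from the explicit family named in the statement. Recall the classical fact (de~Bruijn--Tengbergen--Kruyswijk) that $2^{[n]}$ can be partitioned into chains $C_1,\dots,C_m$ with $m=\binom{n}{\lfloor n/2\rfloor}$, where each $C_t$ contains exactly one set of size $j$ for every $j$ with $i_t\le j\le n-i_t$, for some $i_t$; thus $C_t$ is ``symmetric'' about the middle level and $|C_t|=n-2i_t+1$. If we want to be self-contained, this decomposition can be produced in a few lines by the usual parenthesis-matching rule on the characteristic vectors; otherwise it may simply be quoted.

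The first observation is that, since $\mathcal{F}$ contains no chain $A_1\subset\dots\subset A_{k+1}$, it meets each $C_t$ in at most $k$ sets, so
\[
|\mathcal{F}|=\sum_{t=1}^m|\mathcal{F}\cap C_t|\le\sum_{t=1}^m\min(k,|C_t|).
\]
The heart of the proof is to evaluate this sum exactly. Put $a=\lfloor\tfrac{n-k+1}{2}\rfloor$ and $b=\lfloor\tfrac{n+k-1}{2}\rfloor$, so that $a,a+1,\dots,b$ are $k$ consecutive levels. I would verify that for each $t$,
\[
\min(k,|C_t|)=\bigl|\{\,j\in[a,b]\;:\;C_t\text{ contains a set of size }j\,\}\bigr|,
\]
the point being that a long chain ($|C_t|\ge k$) covers all of $[a,b]$, while a short chain's level range $[i_t,n-i_t]$ lies inside $[a,b]$. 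Summing over $t$, and using that the $C_t$ partition $2^{[n]}$, so that exactly $\binom{n}{j}$ of them meet level $j$, gives
\[
\sum_{t=1}^m\min(k,|C_t|)=\sum_{j=a}^{b}\binom{n}{j},
\]
which is exactly the sum of the $k$ largest among $\binom{n}{0},\dots,\binom{n}{n}$.

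For sharpness I would take $\mathcal{F}_0=\{\,F\subseteq[n]:a\le|F|\le b\,\}$, the family of the statement. Its members occupy only $k$ consecutive levels, so no $k+1$ of them form a chain, and $|\mathcal{F}_0|=\sum_{j=a}^{b}\binom{n}{j}$ matches the upper bound.

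The main obstacle I anticipate is the parity bookkeeping in the second paragraph: one must check that the floor functions make $\{a,\dots,b\}$ genuinely the band of the $k$ largest binomial coefficients (in both parities of $n-k$), and that the containment $[i_t,n-i_t]\subseteq[a,b]$ for every short chain and $[a,b]\subseteq[i_t,n-i_t]$ for every long chain really do hold. These are short case checks, but they are where the argument could slip if one is careless; producing (or citing) the symmetric chain decomposition is the only other ingredient that needs attention.
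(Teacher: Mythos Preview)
Your argument via a symmetric chain decomposition is correct; the parity checks you flag do go through (for a chain with $|C_t|\ge k$ one has $i_t\le a$ and $n-i_t\ge b$, and for $|C_t|<k$ one has $[i_t,n-i_t]\subseteq[a,b]$, in both parities of $n-k$), so the identity $\sum_t\min(k,|C_t|)=\sum_{j=a}^{b}\binom{n}{j}$ is sound.

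However, the paper does \emph{not} use this route. It proves the theorem by Lubell's averaging over all $n!$ maximal chains: for each maximal chain $C$ one has $|\mathcal{F}\cap C|\le k$, and double counting pairs $(C,F)$ with $F\in\mathcal{F}\cap C$ yields the LYM-type inequality $\sum_{F\in\mathcal{F}}\binom{n}{|F|}^{-1}\le k$, from which the bound $|\mathcal{F}|\le\Sigma(n,k)$ is read off by noting that the left side is minimized, for fixed $|\mathcal{F}|$, by packing $\mathcal{F}$ into the $k$ middle levels. Your approach trades this averaging for a single explicit partition (the de~Bruijn--Tengbergen--Kruyswijk decomposition); it reaches the exact bound $\Sigma(n,k)$ directly, without the separate minimization step, at the cost of having to supply or cite the construction of the symmetric chains. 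The paper's choice is not incidental: the Lubell counting argument is precisely what it later refines into the ``double chain'' method, so that proof serves as a template for the rest of the paper, whereas the SCD argument, while perfectly valid here, does not feed into those generalizations.
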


Since choosing all the subsets with certain sizes near $n/2$ is the maximal family for many excluded posets, we use the following notation.

\begin{notation}
$\Sigma (n,m)=\displaystyle\sum_{i=\lfloor\frac{n-m+1}{2}\rfloor}^{\lfloor\frac{n+m-1}{2}\rfloor} {n \choose i}$ denotes the sum of the $m$ largest binomial coefficients belonging to $n$.
\end{notation}

Now we can reformulate Theorem \ref{erdospath}. Let $P_{k+1}$ be the path poset with $k+1$ elements. Then
\begin{equation}
La(n,P_{k+1})=\Sigma (n,k)
\end{equation}

We give here a proof of Theorem \ref{erdospath} to illustrate the chain method introduced by Lubell \cite{Lub}.

\begin{proof} (Theorem \ref{erdospath})
A chain is $n+1$ subsets of $[n]$ satisfying $L_0\subset L_1\subset L_2\subset\dots \subset L_n$ and $|L_i|=i$ for all $i=0,1,2,\dots n$. The number of chains is $n!$. We use double counting for the pairs $(C,F)$ where $C$ is a chain, $F\in C$ and $F \in \mathcal{F}$.

The number of chains going through some subset $F\in \mathcal{F}$ is $|F|!(n-|F|)!$. So the number of pairs is
\[ \sum_{F\in\mathcal{F}} |F|!(n-|F|)! \]
One chain can contain at most $k$ elements of $\mathcal{F}$, otherwise a $P_{k+1}$ poset would be formed. So the number of pairs is at most $k\cdot n!$. It implies
\begin{equation}
\sum_{F\in\mathcal{F}} |F|!(n-|F|)!\leq  k\cdot n!
\end{equation}
\begin{equation}
\sum_{F\in\mathcal{F}} \frac{1}{{n \choose |F|}} \leq k
\end{equation}
Fixing $|\mathcal{F}|$, the left side takes its minimum when we choose the subsets with sizes as near to $n/2$ as possible. Choosing all $\Sigma (n,k)$ subsets with sizes $\lfloor\frac{n-k+1}{2}\rfloor\leq |F| \leq \lfloor\frac{n+k-1}{2}\rfloor$, we have equality. So we have
\begin{equation}
La(n,P_{k+1})=\Sigma (n,k) \qed
\end{equation}
\end{proof}

$La(n,P)$ is determined asymptotically for many posets, but its exact value is known for very few $P$. (See \cite{KSurvey} and \cite{GLu})

\section{The method of double chains}
The main purpose of the present paper is to exactly determine $La(n,P)$ for some posets $P$. Our main tool is a modification of the the chain method, double chains are used rather than chains.
\begin{definition}
Let $C:L_0\subset L_1\subset L_2\subset\dots \subset L_n$ be a chain. The double chain assigned to $C$ is a set $D=\{L_0, L_1, \dots , L_n, M_1, M_2, \dots , M_{n-1}\}$, where $M_i=L_{i-1}\cup(L_{i+1}\backslash L_{i})$.

Note that $|M_i|=|L_i|=i$, \\
$i<j\Rightarrow L_i\subset L_j,~ L_i\subset M_j,~ M_i\subset L_j$ and $i+1<j \Rightarrow M_i\subset M_j$.

$\{L_0, L_1, \dots , L_n\}$ is called the primary line of $D$ and $\{M_1, M_2,\dots , M_{n-1}\}$ is the secondary line.

$\mathcal{D}$ denotes the set of all $n!$ double chains.
\end{definition}

\begin{figure}[h]
\begin{center}
\includegraphics[scale = 0.25] {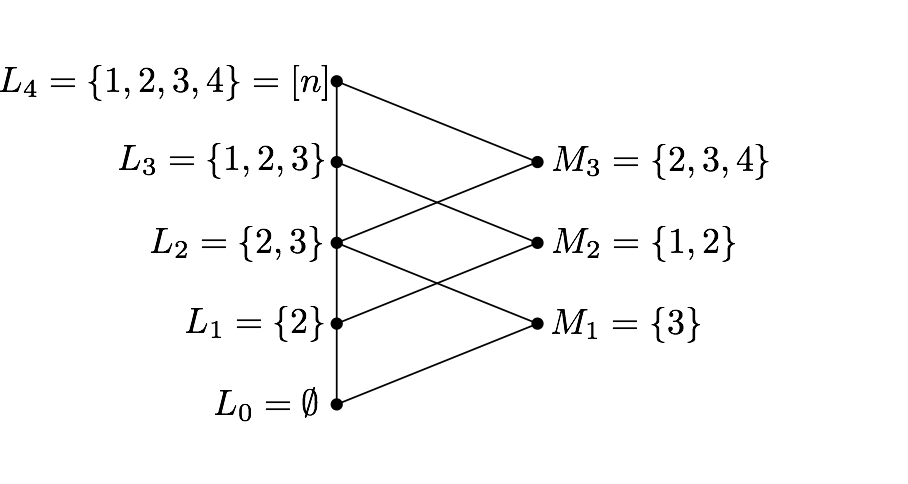}
\end{center}
\caption{The double chain assigned to the chain $\emptyset \subset \{2\} \subset \{2, 3\}
\subset \{1, 2, 3\} \subset \{1, 2, 3, 4\}$. }
\label{fig1}
\end{figure}

\begin{lemma}\label{dclamma1}
Let $\mathcal{F}$ be a family of subsets of $[n]$ $(n\geq 2)$, and let $m$ be a positive real number. Assume that
\begin{equation}
\sum_{D\in\mathcal{D}} |\mathcal{F}\cap D| \leq 2m\cdot n!
\end{equation}
Then
\begin{equation}\label{firstcase}
|\mathcal{F}|\leq m {n \choose \lfloor n/2 \rfloor}
\end{equation}
If $m$ is an integer and $m\leq n-1$, we have the following better bound:
\begin{equation}\label{secondcase}
|\mathcal{F}|\leq \Sigma(n,m)
\end{equation}
\end{lemma}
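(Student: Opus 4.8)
The plan is to count, for each subset $F\subseteq[n]$, the number $N(F)$ of double chains $D\in\mathcal{D}$ with $F\in D$, and then to evaluate $\sum_{D\in\mathcal{D}}|\mathcal{F}\cap D|=\sum_{F\in\mathcal{F}}N(F)$ by double counting. Since $|M_i|=|L_i|=i$, a subset $F$ with $|F|=k$ occurs in the double chain assigned to a chain $C$ only as $L_k$, or, when $1\le k\le n-1$, as $M_k$; these two events are incompatible for a fixed $C$ because $L_k\ne M_k$ always (the set $L_{k+1}\setminus L_k$ is nonempty and disjoint from $L_k$). The number of chains with $L_k=F$ is the classical count $k!\,(n-k)!$ (as in the proof of Theorem~\ref{erdospath}). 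For the number with $M_k=F$ I would write $L_k=L_{k-1}\cup\{x\}$ and $L_{k+1}=L_k\cup\{y\}$, observe $M_k=L_{k-1}\cup\{y\}$, and conclude that $M_k=F$ forces $y\in F$, $L_{k-1}=F\setminus\{y\}$ and $x\in[n]\setminus F$; conversely every choice of such $y$ ($k$ ways), such $x$ ($n-k$ ways), an ordering of $F\setminus\{y\}$ below $L_{k-1}$ and an ordering of $[n]\setminus(F\cup\{x\})$ above $L_{k+1}$ produces exactly one valid chain, giving $k\,(n-k)\,(k-1)!\,(n-k-1)!=k!\,(n-k)!$ chains. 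Hence $N(F)=2\,k!\,(n-k)!$ for $1\le k\le n-1$, while $N(\emptyset)=N([n])=n!$ because $\emptyset$ and $[n]$ belong to every double chain.

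For (\ref{firstcase}) I would just note that $N(F)\ge 2\,n!\,/\,{n \choose \lfloor n/2\rfloor}$ for every $F$: for $1\le|F|\le n-1$ this says $2\,n!\,/\,{n \choose |F|}\ge 2\,n!\,/\,{n \choose \lfloor n/2\rfloor}$, and for $|F|\in\{0,n\}$ it says $n!\ge 2\,n!\,/\,{n \choose \lfloor n/2\rfloor}$, i.e.\ ${n \choose \lfloor n/2\rfloor}\ge 2$, which is exactly where the hypothesis $n\ge 2$ enters. Feeding this into the assumed inequality gives $2m\cdot n!\ge\sum_{F\in\mathcal{F}}N(F)\ge|\mathcal{F}|\cdot 2\,n!\,/\,{n \choose \lfloor n/2\rfloor}$, which is (\ref{firstcase}).

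For (\ref{secondcase}) put $w(F)=N(F)/n!$, and write $w_k$ for the common value of $w(F)$ over $|F|=k$, so the hypothesis reads $\sum_{F\in\mathcal{F}}w(F)\le 2m$, with $w_k=2/{n \choose k}$ for $1\le k\le n-1$ and $w_0=w_n=1$. Let $I=\{\lfloor\tfrac{n-m+1}{2}\rfloor,\dots,\lfloor\tfrac{n+m-1}{2}\rfloor\}$ be the set of $m$ middle levels, so that $\sum_{k\in I}{n \choose k}=\Sigma(n,m)$; the assumption $m\le n-1$ is precisely what forces $I\subseteq\{1,\dots,n-1\}$ (a short check with the floors), and then ${n \choose k}\,w_k=2$ for every $k\in I$. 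Since ${n \choose k}\ge 2$ for $1\le k\le n-1$, every level outside $I$ carries a $w$-value at least as large as every level inside $I$; let $w^{*}$ be the least $w$-value attained off $I$. Writing $a_k$ for the number of members of $\mathcal{F}$ of size $k$, one gets $\sum_{k\notin I}a_k\le\frac{1}{w^{*}}\sum_{k\notin I}a_k w_k$ (using $w_k\ge w^{*}$ off $I$) and $\sum_{k\in I}a_k\le\Sigma(n,m)-\frac{1}{w^{*}}\bigl(2m-\sum_{k\in I}a_k w_k\bigr)$ (using $w_k\le w^{*}$ on $I$ together with $\sum_{k\in I}{n \choose k}w_k=2m$); adding these and invoking $\sum_k a_k w_k\le 2m$ makes the $w^{*}$-terms cancel, leaving $|\mathcal{F}|=\sum_k a_k\le\Sigma(n,m)$. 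In effect, among all families respecting the weight budget the extremal one is the union of the $m$ middle levels, which spends exactly $2m$ — a fractional-knapsack phenomenon.

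The two steps I expect to need the most care are the enumeration of chains with a prescribed secondary set $M_k$ (the genuinely new combinatorial input: one must verify that the data $(x,y)$ together with the two orderings determines the chain, and that the count collapses to the same $k!\,(n-k)!$), and the optimality argument for (\ref{secondcase}). For the latter the temptation is a pointwise bound of the form $w(F)\ge 2m/\Sigma(n,m)$, but this is false on the middle levels (where $w$ is smallest yet the levels are large), so one is forced into the two-sided comparison between $I$ and its complement rather than a single inequality per set. It is also worth flagging explicitly that $n\ge 2$ is used only for (\ref{firstcase}), and $m\le n-1$ only to keep $I$ away from the levels $0$ and $n$ in (\ref{secondcase}).
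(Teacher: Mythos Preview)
Your proposal is correct and follows essentially the same route as the paper: double-count pairs $(D,F)$ by showing each $F$ with $1\le|F|\le n-1$ lies in exactly $2\,|F|!\,(n-|F|)!$ double chains (once on the primary line, once on the secondary), while $\emptyset$ and $[n]$ lie in all $n!$, then divide through and optimize. The only visible difference is in the second half: where the paper disposes of (\ref{secondcase}) with a one-line ``the left side is minimized by the $m$ middle levels, and there equality holds'' argument, you unfold the same exchange idea into an explicit two-sided estimate via the threshold weight $w^{*}$; this is a legitimate and more self-contained way to write the same step.
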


\begin{proof}
First we count how many double chains contains a given subset $F\subset [n]$. $\emptyset$ and $[n]$ are contained in all $n!$ double chains. Now let $F\not\in \{\emptyset, [n]\}$. $F$ is contained in the primary line of $|F|!(n-|F|)!$ double chains. Now count the double chains containing $F$ in the secondary line. Letting $F=M_{|F|}$, we have $|F|\cdot (n-|F|)$ possibilities to choose $L_{|F|}$, since we have to replace one element of $M_{|F|}$ with a new one. $M_{|F|}$ and $L_{|F|}$ already define $L_{|F|-1}$ and $L_{|F|+1}$. We have $(|F|-1)!$ and $(n-|F|-1)!$ possibilities for the first and last part of the primary line, so the number of double chains containing $F$ in the secondary line is $|F|(n-|F|)(|F|-1)!(n-|F|-1)!=|F|!(n-|F|)!$. It gives a total of $2|F|!(n-|F|)!$ double chains containing $F$.

Let $t=|\mathcal{F}\cap\{\emptyset, [n]\}|$. Double counting the pairs $(D,F)$ where $D\in \mathcal{D}$, $F\in D$ and $F\in \mathcal{F}$ we have
\begin{equation}
t\cdot n!+\sum_{F\in\mathcal{F}\backslash\{\emptyset, [n]\}} 2|F|!(n-|F|)! \leq 2m\cdot n!
\end{equation}
\begin{equation}\label{recbinom}
t\cdot\frac{1}{2}+\sum_{F\in\mathcal{F}\backslash\{\emptyset, [n]\}} \frac{1}{{n\choose |F|}} \leq m
\end{equation}

Since ${n \choose \lfloor n/2 \rfloor}$ is the biggest binomial coefficient, and ${n \choose \lfloor n/2 \rfloor}\geq 2$ we have
\begin{equation}
\frac{|\mathcal{F}|}{{n\choose \lfloor n/2 \rfloor}} \leq m
\end{equation}
It proves (\ref{firstcase}). If $m$ is an integer, and $m\leq n-1$, considering $|\mathcal{F}|$ fixed, the left side of (\ref{recbinom}) is minimal when we choose subsets with sizes as near to $n/2$ as possible. Choosing all $\Sigma (n,m)$ subsets with such sizes, we have equality in (\ref{recbinom}). It implies $|\mathcal{F}|\leq \Sigma(n,m)$, so (\ref{secondcase}) is proved. \qed
\end{proof}

\begin{definition}
The infinite double chain is an infinite poset with elements $L_i,~i\in \mathbb{Z}$ and $M_i,~i\in\mathbb{Z}$. The defining relations between the elements are
\[i<j\Rightarrow L_i\subset L_j,~ L_i\subset M_j,~ M_i\subset L_j\]
\end{definition}

\begin{figure}[h]
\begin{center}
\includegraphics[scale = 0.25] {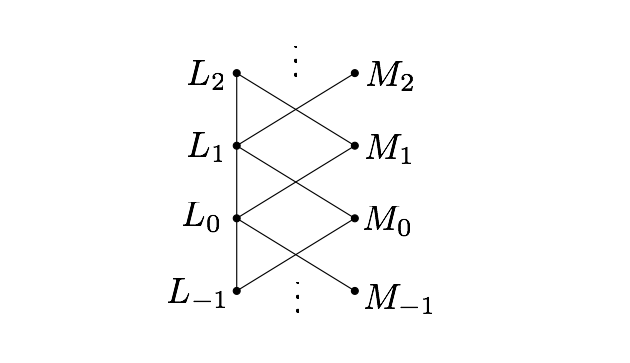}
\end{center}
\caption{The infinite double chain.}
\label{fig2}
\end{figure}

Note that the poset formed by the elements of any double chain with the inclusion as relation is a subposet of the infinite double chain.

\begin{lemma}\label{dclemma2}
Let $m$ be an integer or half of an integer and $P$ be a finite poset. Assume that any subset of size $2m+1$ of the infinite double chain contains $P$ as a (not necessarily induced) subposet. Let $\mathcal{F}$ be a family of subsets of $[n]$ such that $\mathcal{F}$ does not contain $P$. Then
\begin{equation}
|\mathcal{F}|\leq m {n \choose \lfloor n/2 \rfloor}
\end{equation}
If $m$ is an integer and $m\leq n-1$ we have the following better bound:
\begin{equation}
|\mathcal{F}|\leq \Sigma(n,m)
\end{equation}
\end{lemma}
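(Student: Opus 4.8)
I would prove this by reducing it to Lemma~\ref{dclamma1}. The key claim is a pointwise estimate: every double chain $D\in\mathcal{D}$ satisfies $|\mathcal{F}\cap D|\le 2m$. Granting this, summing over the $n!$ double chains gives $\sum_{D\in\mathcal{D}}|\mathcal{F}\cap D|\le 2m\cdot n!$, and Lemma~\ref{dclamma1}, applied with this very $m$, yields $|\mathcal{F}|\le m\binom{n}{\lfloor n/2\rfloor}$, together with the sharper bound $|\mathcal{F}|\le\Sigma(n,m)$ when $m$ is an integer with $m\le n-1$. Note that $2m$ is an integer in all cases, so $|\mathcal{F}\cap D|\le 2m$ is the same as the non-existence of a $(2m+1)$-element subfamily of $\mathcal{F}\cap D$; also, Lemma~\ref{dclamma1} requires $n\ge 2$, which is the only case I need to treat.

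To establish the pointwise estimate I would argue by contradiction. Suppose some $D\in\mathcal{D}$ contains a subfamily $\mathcal{S}\subseteq\mathcal{F}\cap D$ with $|\mathcal{S}|=2m+1$. The members of $D$, ordered by inclusion, form a subposet of the infinite double chain (the remark following its definition), and in fact this subposet is \emph{induced}: $D$ under inclusion is order-isomorphic to the subset $\{L_0,\dots,L_n,M_1,\dots,M_{n-1}\}$ of the infinite double chain, since the comparabilities listed in the definition of a double chain are exactly the ones holding among its members (a direct check from the formula $M_i=L_{i-1}\cup(L_{i+1}\backslash L_i)$; in particular $|L_i|=|M_i|=i$ forbids comparabilities at the same level). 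Consequently $\mathcal{S}$ is order-isomorphic to a $(2m+1)$-element subset of the infinite double chain, so the hypothesis of the lemma applies to it and that subset contains $P$ as a (not necessarily induced) subposet. Transporting this copy of $P$ back along the isomorphism gives an injection $f\colon P\to\mathcal{S}$ with $a<_P b\Rightarrow f(a)\subset f(b)$. As $\mathcal{S}\subseteq\mathcal{F}$, this exhibits $P$ inside $\mathcal{F}$, contradicting the assumption that $\mathcal{F}$ is $P$-free. Hence $|\mathcal{F}\cap D|\le 2m$ for every $D\in\mathcal{D}$, and the reduction above completes the proof.

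The argument carries essentially no computational content, so the only delicate point is the bookkeeping in the second paragraph: one must be sure that the comparabilities among the sets of a double chain are \emph{exactly} those of the infinite double chain on the matching indices, not merely a subset of them, so that a copy of $P$ found via the abstract order of the infinite double chain really corresponds to a chain of genuine set inclusions inside $\mathcal{F}$. Beyond that, everything is a direct invocation of Lemma~\ref{dclamma1}.
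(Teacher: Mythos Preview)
Your proof is correct and follows the same approach as the paper: bound $|\mathcal{F}\cap D|\le 2m$ for every double chain $D$ by appealing to the hypothesis on the infinite double chain, sum over all $n!$ double chains, and invoke Lemma~\ref{dclamma1}. Your extra care in verifying that the natural map from a double chain to the infinite double chain is an order \emph{isomorphism} onto its image (not merely order-preserving) is well placed, since that is exactly what is needed to transport a copy of $P$ back into $\mathcal{F}$; the paper states this step more tersely.
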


\begin{proof}
Since the poset formed by the elements of any double chains is a subposet of the infinite double chain,
\\$|\mathcal{F}\cap D|\leq 2m$ for all double chains $D$. There are $n!$ double chains, so
\begin{equation}
\sum_{D\in\mathcal{D}} |\mathcal{F}\cap D| \leq 2m\cdot n!
\end{equation}
holds. Now we can use Lemma \ref{dclamma1} and finish the proof. \qed
\end{proof}

\section{An upper estimate for arbitrary posets}

\begin{definition}
The size of the longest chain in a finite poset $P$ is the largest integer $L(P)$ such that for some $a_1, a_2, \dots , a_{L(P)}\in P$, $a_1<_p a_2<_p\dots <_p a_{L(P)}$ holds.
\end{definition}

\begin{figure}[h]
\begin{center}
\includegraphics[scale = 0.5] {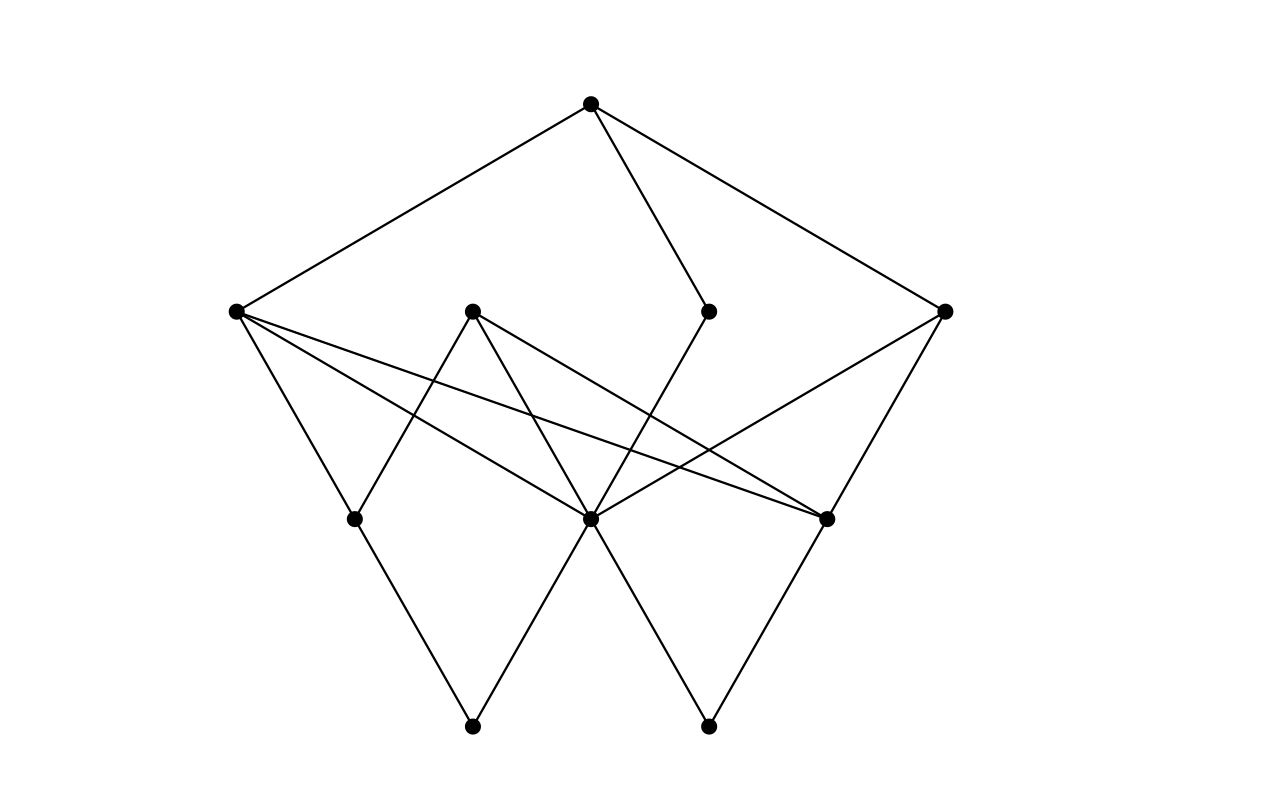}
\end{center}
\caption{A poset with $|P|=10$ elements and longest chain of length $L(P)=4$.}
\label{fig3}
\end{figure}

\begin{theorem}\label{mainbound}
Let $P$ be a finite poset and let $\mathcal{F}$ be a $P$-free family of subsets of $[n]$. Then
\begin{equation}
|\mathcal{F}|\leq \left(\frac{|P|+L(P)}{2}-1\right) {n \choose \lfloor n/2 \rfloor}
\end{equation}
If $\frac{|P|+L(P)}{2}-1$ is an integer and $\frac{|P|+L(P)}{2}\leq n$ we have the following better bound:
\begin{equation}
|\mathcal{F}|\leq \Sigma\left(n,\frac{|P|+L(P)}{2}-1\right)
\end{equation}
\end{theorem}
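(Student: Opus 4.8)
The plan is to deduce Theorem \ref{mainbound} from Lemma \ref{dclemma2} by verifying its hypothesis for the value $m=\frac{|P|+L(P)}{2}-1$. Concretely, I would show that every subset $S$ of the infinite double chain with $|S|=2m+1=|P|+L(P)-1$ elements contains $P$ as a (not necessarily induced) weak subposet; once this is established, Lemma \ref{dclemma2} gives exactly the two claimed bounds, including the sharper $\Sigma$-bound under the stated integrality and size conditions. So the whole argument reduces to a purely combinatorial statement about the infinite double chain.

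To prove that embedding claim, write $\ell = L(P)$ and $k=|P|$, so $|S| = k+\ell-1$. The key structural observation about the infinite double chain is that it is the union of two chains, the primary line $\{L_i\}$ and the secondary line $\{M_i\}$, and moreover one can extract from $S$ a long chain: I would argue that $S$ contains a chain of length at least $|S| - (\text{something controlled by } \ell)$. More precisely, I expect the right lemma to be: \emph{any $N$-element subset of the infinite double chain contains a chain of length at least $N-\ell+1$ in such a way that the remaining $\ell-1$ elements can be "slotted in" above or below appropriate points of that chain.} The mechanism is that the only obstruction to comparability between two elements $L_i$ and $M_j$ (or $M_i$ and $M_j$) is when their indices are equal or differ by exactly one; so if we pick the elements of $S$ and sort them by index, at most a bounded number of "collisions" occur. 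Using that $P$ itself has all its elements lying in chains — or more directly, that $P$ can be covered/decomposed so that a path of length $\ell$ through $P$ plus the remaining $k-\ell$ elements can each be placed — one matches the $\ell$-chain in $S$ to a longest chain of $P$ and maps the remaining $k-\ell$ elements of $P$ into the remaining $k-\ell+(\ell-1)-(\ell) +\dots$ elements of $S$ that survive. Careful bookkeeping of the index collisions is what makes $\frac{|P|+L(P)}{2}-1$ come out as the exact threshold, since each "wasted" pair of close indices in the secondary line costs $\tfrac12$ in the $m$-count, matching the $\frac12$ weighting of $\emptyset,[n]$ already visible in inequality (\ref{recbinom}).

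The main obstacle, I expect, is exactly this index-collision analysis: showing that from $|P|+L(P)-1$ elements of the infinite double chain one can \emph{always} find the required copy of $P$, and that this bound cannot be improved, i.e.\ that fewer elements may fail. The delicate point is that $P$ is an arbitrary finite poset, not just a chain or an antichain, so the embedding of $P$ must respect an arbitrary partial order using only the comparabilities available in a subset of two interleaved chains. I would handle this by taking a longest chain $a_1 <_P a_2 <_P \dots <_P a_\ell$ in $P$, noting every other element of $P$ is comparable to "not too many" of these or can be placed in a gap, and then a greedy/pigeonhole assignment: sort the $|P|+L(P)-1$ chosen double-chain elements by index, observe that a run of $\ell$ of them forming a genuine chain must exist (since each index value hosts at most two elements and each "bad adjacency" can be charged once), embed $a_1,\dots,a_\ell$ there, and embed the remaining $|P|-\ell$ poset elements into the remaining double-chain elements, which by construction sit strictly above or below their required neighbours. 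The lower-bound / sharpness direction (that $|P|+L(P)-2$ elements need not contain $P$) is a construction I would give by exhibiting a specific bad subset — typically taking a long chain minus a bit, or a chain together with a short "parallel" piece from the secondary line — and checking it is $P$-free; this also explains why the theorem is phrased as an upper bound on $La(n,P)$ rather than an equality.
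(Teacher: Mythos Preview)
Your reduction is exactly right: the theorem follows from Lemma \ref{dclemma2} with $m=\frac{|P|+L(P)}{2}-1$, and what remains is the purely combinatorial claim that every $(|P|+L(P)-1)$-element subset $S$ of the infinite double chain contains $P$ as a weak subposet. This is precisely what the paper does.

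Where your proposal diverges from the paper, and where it has a real gap, is in the proof of that embedding lemma. You propose to first locate a chain of length $\ell=L(P)$ inside $S$, embed a longest chain of $P$ there, and then ``slot in'' the remaining $|P|-\ell$ elements of $P$ among the leftover elements of $S$. The existence of an $\ell$-chain in $S$ is fine (the infinite double chain has width $2$, so Dilworth gives a chain of length $\lceil |S|/2\rceil\ge \ell$). But the slotting-in step is not justified: the remaining $|P|-\ell$ elements of $P$ carry arbitrary order relations both to the fixed longest chain and to one another, and you give no mechanism ensuring that the leftover elements of $S$ sit in the correct positions relative to the already-placed chain to realise those relations. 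Your phrase ``by construction sit strictly above or below their required neighbours'' is exactly the point that needs an argument, and none is supplied. The remark linking this to the $\tfrac12$ weight of $\emptyset,[n]$ in (\ref{recbinom}) is a red herring: that weighting concerns the counting in Lemma \ref{dclamma1}, not the embedding problem.

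The paper's argument for the embedding lemma is quite different and much cleaner: induction on $L(P)$. List the elements of the infinite double chain as $\dots,L_{-1},M_{-1},L_0,M_0,L_1,M_1,\dots$ and take the first $k$ elements of $S$ in this order to serve as the $k$ minimal elements of $P$ (they need no relations among themselves). The key structural fact is that \emph{every} later element of $S$, with at most one exception, lies strictly above all $k$ chosen elements; discard the exception. The remaining set $S'$ has size at least $|P|+L(P)-k-2=|P'|+L(P')-1$, where $P'$ is $P$ with its minimals removed, and the inductive hypothesis embeds $P'$ into $S'$. This layer-by-layer peeling is what makes the bookkeeping trivial; your longest-chain-first strategy would require a substantially more delicate analysis that you have not carried out. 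Finally, the sharpness discussion in your last paragraph is unnecessary for the theorem as stated, which is only an upper bound.
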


\begin{proof}
We want to use Lemma \ref{dclemma2} with $m=\frac{|P|+L(P)}{2}-1$. So the only thing we have to prove is the following lemma.
\end{proof}

\begin{lemma}
Let $P$ be a finite poset. Then any subset $S$ of size $|P|+L(P)-1$ of the infinite double chain contains $P$ as a (not necessarily induced) subposet.
\end{lemma}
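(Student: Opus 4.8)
The plan is to induct on $L(P)$. \emph{Base case:} if $L(P)=1$, then $P$ is an antichain and $|P|+L(P)-1=|P|$, so any injection of $P$ into an arbitrary $|P|$-element poset is vacuously order-preserving; in particular every $S$ of the required size contains $P$.

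\emph{Inductive step.} Suppose $L(P)=\ell\ge 2$ and let $h\colon P\to\{1,\dots,\ell\}$ be the height function, $h(y)$ being the length of the longest chain of $P$ with top element $y$; then $h$ is surjective and strictly increasing along $<_P$. Put $A=h^{-1}(\ell)$, $k=|A|\ge 1$, and $P'=P\setminus A$. Cutting a longest chain of $P$ just below its top keeps $\ell-1$ of its elements inside $P'$, while every chain in $P'$ has length at most $\ell-1$ since $h\le\ell-1$ there and increases along chains; hence $P'\ne\emptyset$, $|P'|=|P|-k$, and $L(P')=\ell-1$, so that $|P'|+L(P')-1=|S|-k-1$, a number which is at least $1$ because $\ell\ge 2$. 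Thus the step reduces to the following claim about an arbitrary $S$ inside the infinite double chain: $S$ can be split into disjoint parts $T$, $S'$ and a single element $w$ with $|T|=k$, $|S'|=|S|-k-1$, and every element of $S'$ strictly below every element of $T$. Granting the claim, the inductive hypothesis gives an order-preserving injection $g\colon P'\to S'$, and extending $g$ by an arbitrary bijection $A\to T$ yields $f\colon P\to S$; it is injective, and order-preserving because $x<_P y$ forces $h(x)<h(y)$: if $y\in A$ then $x\in P'$ and $f(x)\in S'$ lies below $f(y)\in T$, while if $y\in P'$ then also $x\in P'$ and $g$ already preserves the relation.

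\emph{The splitting claim} is the crux. The point is that incomparabilities in the infinite double chain are local: listing its elements as $\dots,L_{i-1},M_i,L_i,M_{i+1},L_{i+1},\dots$, i.e.\ by the rank $r(L_i)=2i$, $r(M_i)=2i-1$, a short case check shows that $x<y$ whenever $r(y)-r(x)\ge 3$, and that the only incomparable pairs are $\{M_i,L_i\}$ (ranks differing by $1$) and $\{M_i,M_{i+1}\}$ (ranks differing by $2$). Write $S=\{s_1,\dots,s_N\}$ with $r(s_1)<\dots<r(s_N)$, so $r(s_b)-r(s_a)\ge b-a$. Take $T=\{s_{N-k+1},\dots,s_N\}$ and let $S'$ be any $(N-k-1)$-element subset of $\{\,s_a : a\le N-k,\ s_a<s_{N-k+1}\,\}$. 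This set has at least $N-k-1$ elements: every $s_a$ with $a\le N-k-2$ satisfies $s_a<s_{N-k+1}$ (rank gap $\ge 3$), and of the two remaining candidates $s_{N-k-1}$, $s_{N-k}$ at most one can fail — a failure of $s_{N-k}$ would force $s_{N-k}=M_i$ for some $i$, whereas a failure of $s_{N-k-1}$ would force $s_{N-k}=L_j$ for some $j$, and these are incompatible. Finally $S'<T$: for $s_a\in S'$ and $s_b\in T$ the relation $s_a<s_b$ holds when $b=N-k+1$ by the choice of $S'$, and when $b\ge N-k+2$ the rank gap is at least $2$; if it were exactly $2$ and $s_a\not<s_b$, then $\{s_a,s_b\}=\{M_i,M_{i+1}\}$ with $s_{a+1}=L_i$ lying between them, which forces $a=N-k$ and $s_{N-k+1}=L_i$, contradicting $s_a=M_i<s_{N-k+1}$.

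The case checks defining the linear order, and the computation that $L(P')=\ell-1$, I would isolate as short preliminary lemmas. I expect the only delicate point to be the off-by-one in the splitting claim — that only one element of $S$ must be thrown away, not two — which is precisely the ``at most one of $s_{N-k-1},s_{N-k}$ can fail'' assertion and the mutual-exclusivity argument behind it; everything else is bookkeeping.
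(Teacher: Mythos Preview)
Your proof is correct and follows essentially the same inductive strategy as the paper: arrange the double chain linearly, peel one extreme level off $P$, assign it to the $k$ extreme elements of $S$, discard at most one further element, and recurse on the rest. The differences are cosmetic --- you strip the height-$\ell$ elements from the top rather than the minimal elements from the bottom, you use the ordering $\dots,M_i,L_i,\dots$ rather than the paper's $\dots,L_i,M_i,\dots$, and you spell out the ``at most one exception'' case analysis that the paper merely asserts.
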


\begin{proof}
We prove the lemma using induction on $L(P)$. When $L(P)=1$, we have a subset of size $|P|$ in the infinite double chain. We can choose them all, we get the poset $P$, since there are no relations between its elements. Assume that we already proved the lemma for all posets with longest chain of size $l-1$, and prove it for a poset $P$ with $L(P)=l$.

Arrange the elements of the infinite double chain as follows:
\[\dots L_{-1}, M_{-1}, L_0, M_0, L_1, M_1, L_2, M_2\dots  \]
Assume that $P$ has $k$ minimal elements, and choose the $k$ first elements of $S$ for them according to the above arrangement. Note that all remaining elements of $S$, except for at most one, are greater in the infinite double chain than all the $k$ elements we just chose. If there is such an exception, delete that element from $S$. Now we have at least $|P|+L(P)-k-2$ elements of $S$ left, all greater than the $k$ we chose for the minimal elements. Denote the set of these elements by $S'$.

Let $P'$ be the poset obtained by $P$ after deleting its minimal elements. It has $|P'|=|P|-k$ elements and a longest chain of size $L(P')=L(P)-1$. By the inductive hypothesis $P'$ is formed by some elements of $S'$, since $|S'|\geq |P|+L(P)-k-2 =|P'|+L(P')-1$. Considering these elements together with the first $k$, they form $P$ as a weak subposet in $S$. \qed
\end{proof}

\begin{remark}
The previously known upper bound for maximal families not containing a general $P$ as weak subposet was $\Sigma(n,|P|-1)$. We can get it from Theorem \ref{erdospath} since $P$ is a subposet of the path poset $P_{|P|}$. The new upper bound, $\Sigma\left(n,\frac{|P|+L(P)}{2}-1\right)$ is better since $L(P)\leq |P|$, and equality occurs only when $P$ is a path poset.
\end{remark}

\section{Exact results}

In this section we will describe infinitely many posets for which Theorem \ref{mainbound} provides a sharp bound.

\begin{definition}
For a finite poset $P$, $e(P)$ is the maximal $m$ such that the family formed by all subsets of $[n]$ of size $k, k+1, \dots , k+m-1$ is $P$-free for all $n$ and $k$.
\end{definition}

We will prove that $La(n,P)=\Sigma(n,e(P))$ if $n$ is large enough for infinitely many $P$, verifying the following conjecture for these posets.

\begin{conjecture} \cite{Bukh}
For all finite poset $P$
\begin{equation}
La(n,P)=e(p){n \choose \lfloor n/2 \rfloor}\left(1+O(1/n)\right)
\end{equation}
\end{conjecture}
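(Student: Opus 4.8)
The plan is to establish the conjecture in its sharp form, $La(n,P)=\Sigma(n,e(P))$ for all sufficiently large $n$, for the posets built from the seven base posets by the two operations; the stated asymptotic then follows immediately, since each of the $e(P)$ central binomial coefficients equals ${n\choose\lfloor n/2\rfloor}\bigl(1+O(1/n)\bigr)$, whence $\Sigma(n,e(P))=e(P){n\choose\lfloor n/2\rfloor}\bigl(1+O(1/n)\bigr)$. I split the equality into a lower and an upper bound. The lower bound uses only the definition of $e(P)$: the family of all subsets of $[n]$ whose sizes form the $e(P)$ central levels is $P$-free and has exactly $\Sigma(n,e(P))$ members, so $La(n,P)\geq\Sigma(n,e(P))$, and this holds for every finite poset.

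For the upper bound I push the double chain method to its sharpest threshold. By Lemma \ref{dclemma2} applied with $m=e(P)$ (an integer, and $\leq n-1$ once $n$ is large), it suffices to prove the purely combinatorial claim that every subset of size $2e(P)+1$ of the infinite double chain contains $P$ as a weak subposet. Writing $g(P)$ for the largest size of a $P$-free subset of the infinite double chain, this claim is exactly $g(P)\leq 2e(P)$. The reverse inequality $g(P)\geq 2e(P)$ always holds: realize a concrete double chain inside a large Boolean lattice and take its $e(P)$ central levels $\{L_i,M_i\}$; these $2e(P)$ elements lie in $e(P)$ consecutive levels of the cube and hence form a $P$-free subposet by the definition of $e(P)$. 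Thus the whole upper bound reduces to the identity $g(P)=2e(P)$, after which Lemma \ref{dclemma2} yields $|\mathcal F|\leq\Sigma(n,e(P))$ and the two bounds meet.

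The main obstacle is precisely this identity $g(P)=2e(P)$. It cannot hold for all $P$: Theorem \ref{mainbound} only produces the embedding threshold $\tfrac{|P|+L(P)}{2}-1$, which strictly exceeds $e(P)$ for every non-path $P$, and closing the $O(1)$ gap between $\tfrac{|P|+L(P)}{2}-1$ and $e(P)$ is exactly what separates the crude upper bound from the conjectured constant — and why the conjecture remains open in general. My route is structural and inductive. First I would compute $g$ and $e$ directly on the seven base posets and verify $g=2e$ there. Then I would show that under each of the two composition operations both invariants obey the \emph{same} recursion, so that $g(P)=2e(P)$ propagates from the base cases to the entire family; combined with the reduction above, this forces $La(n,P)=\Sigma(n,e(P))$ and hence the conjecture for these $P$.

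The delicate step in the induction is the operation that stacks one poset above another. There one must analyze how a maximal $P$-free subset of the infinite double chain distributes across consecutive levels, exploiting that the double chain's built-in incomparabilities ($L_i\parallel M_i$ and $M_i\parallel M_{i+1}$) make a short band of levels strictly richer than a single antichain yet strictly poorer than a full Boolean interval. The embedding argument underlying Theorem \ref{mainbound} must here be sharpened so that the minimal elements of $P$ are matched into the double chain without wasting a level, and the induction passes to the deletion of those elements with the level-count decreasing by exactly the right amount. Making this splitting tight enough to match $e(P)$ \emph{exactly}, rather than only up to an additive constant, is where the real work lies.
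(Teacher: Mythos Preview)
The statement is a conjecture; the paper does not prove it in general, and neither do you. You (sensibly) retreat to the class of posets built from the seven base posets via $\oplus$ and $\otimes$, but that is the content of Theorem~\ref{main}, not of the conjecture as stated for all finite $P$.

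For that restricted goal your plan contains a real misconception that sends you down a harder road than necessary. You assert that $b(P)=\frac{|P|+L(P)}{2}-1$ ``strictly exceeds $e(P)$ for every non-path $P$'' and hence that the embedding argument behind Theorem~\ref{mainbound} must be \emph{sharpened} to reach the threshold $2e(P)$ in the double chain. This is false: the whole point of Section~4 is that for each of the seven base posets --- none of which except $E$ is a path --- and for every poset built from them by $\oplus$ and $\otimes$, one has $e(P)=b(P)$ exactly. For instance the butterfly $B$ has $b(B)=2=e(B)$. Once $e(P)=b(P)$, Lemma~3.2 already gives $g(P)\le |P|+L(P)-2=2b(P)=2e(P)$ with no sharpening whatsoever, and your own inequality $g(P)\ge 2e(P)$ closes it.

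The paper's route is therefore strictly simpler than the one you sketch. It does not attempt a recursion on $g$ or a tightened level-by-level embedding. Instead it verifies $e(P)\ge b(P)$ on the seven base posets by a short case analysis in the Boolean lattice (Lemma~\ref{lotofposets}: check that $b(P)$ consecutive levels are $P$-free), and then observes that $b$ is exactly additive under $\oplus,\otimes$ while $e$ is at least additive (Lemma~\ref{eadd}), so the equality $e=b$ propagates (Lemma~\ref{ops}); the reverse inequality $e\le b$ is automatic from Theorem~\ref{mainbound}. Your proposed direct attack on $g$ is not wrong in spirit, but the ``delicate step'' you anticipate --- matching minimal elements without wasting a level --- is work that simply does not need to be done for this class of posets.
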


In \cite{Bukh} Bukh proved the conjecture for all posets whose Hasse-diagram is a tree.

\begin{notation}
\begin{equation}\label{bp}
b(P)=\frac{|P|+L(P)}{2}-1 \text{, The bound used in Theorem \ref{mainbound}}
\end{equation}
\end{notation}

\begin{lemma} \label{eb}
Assume that $e(P)=b(P)$ for a finite poset $P$ and $n\geq b(P)+1$. Then
\begin{equation}
La(n,P)=\Sigma(n,e(P))=\Sigma(n,b(P))
\end{equation}
\end{lemma}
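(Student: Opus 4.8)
The plan is to establish the claimed equality by a two-sided argument. For the upper bound, I would invoke Theorem \ref{mainbound}: since $n \geq b(P)+1$ means $\frac{|P|+L(P)}{2} \leq n$, and since we are in the case where $b(P) = e(P)$ is an integer (because $e(P)$ is always a nonnegative integer by definition), the better bound of Theorem \ref{mainbound} applies and gives $|\mathcal{F}| \leq \Sigma(n, b(P))$ for every $P$-free family $\mathcal{F}$. Hence $La(n,P) \leq \Sigma(n, b(P)) = \Sigma(n, e(P))$.

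For the lower bound, I would exhibit an explicit $P$-free family of the required size. By the definition of $e(P)$, the family $\mathcal{F}_0$ consisting of all subsets of $[n]$ whose sizes lie in a window of $e(P)$ consecutive values is $P$-free for every $n$ and every choice of the starting size $k$. Choosing the window centered as close to $n/2$ as possible — that is, taking all subsets $F$ with $\lfloor \frac{n-e(P)+1}{2}\rfloor \leq |F| \leq \lfloor \frac{n+e(P)-1}{2}\rfloor$ — gives $|\mathcal{F}_0| = \Sigma(n, e(P))$ by the definition of the notation $\Sigma(n,m)$. This requires $e(P) \leq n$, which is guaranteed by $n \geq b(P)+1 = e(P)+1$. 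Therefore $La(n,P) \geq \Sigma(n, e(P))$, and combined with the upper bound we get equality.

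The only genuinely subtle point is making sure the hypotheses of the "better bound" half of Theorem \ref{mainbound} are met, namely that $b(P)$ is an integer and that $\frac{|P|+L(P)}{2} \leq n$. The former is immediate under the assumption $e(P) = b(P)$, since $e(P)$ is an integer by its definition. The latter is exactly $b(P) + 1 \leq n + \tfrac12$, which follows from $n \geq b(P) + 1$. Everything else is just unwinding definitions, so I expect no real obstacle; the proof is essentially a bookkeeping exercise combining Theorem \ref{mainbound} with the extremal construction implicit in the definition of $e(P)$. If one wanted to be careful, one might note that $b(P) \geq 1$ always (since $|P| \geq L(P) \geq 1$ forces $b(P) = \frac{|P|+L(P)}{2} - 1 \geq \frac{1+1}{2}-1 = 0$, with equality only in degenerate cases), but this does not affect the argument.
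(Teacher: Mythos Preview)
Your proposal is correct and follows essentially the same approach as the paper: the upper bound comes from Theorem \ref{mainbound} (using that $e(P)=b(P)$ forces $b(P)$ to be an integer and that $n\geq b(P)+1$ gives $\frac{|P|+L(P)}{2}\leq n$), and the lower bound comes from the $P$-free family of $e(P)$ consecutive middle levels guaranteed by the definition of $e(P)$. You are somewhat more explicit than the paper in verifying the hypotheses of the ``better bound'' case, but the argument is the same.
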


\begin{proof}
The family of subsets of size $\lfloor\frac{n-e(P)+1}{2}\rfloor \leq |F| \leq \lfloor\frac{n+e(P)-1}{2}\rfloor$ has $\Sigma(n,e(P))$ elements and is $P$-free by the definition of $e(P)$. On the other hand, Theorem \ref{mainbound} states that a $P$-free family has at most $\Sigma(n,b(P))$ elements. \qed
\end{proof}

Now we show some posets satisfying $e(P)=b(P)$.

\begin{definition} (See figure \ref{fig4}).
\\$E$ is the poset with one element.
\\The elements of the following posets are divided into levels so that $a$ is greater than $b$ in the poset if and only if $a$ is in a higher level than $b$.
\\$B$ is the butterfly poset, a poset with 2 elements on each level.
\\$D_3$ is the 3-diamond poset, a poset with respectively 1, 3 and 1 element on its levels.
\\$Q$ is a poset with respectively 2, 3 and 2 elements on its levels.
\\$R$ is a poset with respectively 1, 4, 4 and 1 element on its levels.
\\$S$ is a poset with respectively 1, 4 and 2 elements on its levels.
\\$S'$ is a poset with respectively 2, 4 and 1 element on its levels.
\end{definition}

\begin{figure}[h]
\begin{center}
\includegraphics[scale = 0.35] {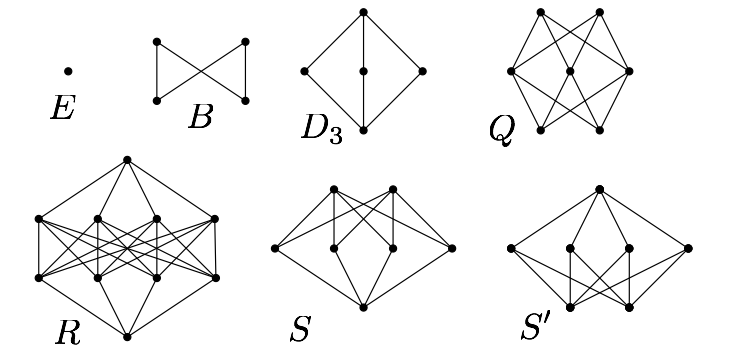}
\end{center}
\caption{7 small posets satisfying $e(P)=b(P)$.}
\label{fig4}
\end{figure}

\begin{lemma}\label{lotofposets}
For all $P\in \{E, B, D_3, Q, R, S, S'\}$, $e(P)=b(P)$ holds.
\end{lemma}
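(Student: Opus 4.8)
The plan is to verify the two inequalities $e(P) \le b(P)$ and $e(P) \ge b(P)$ separately for each of the seven posets. For the first direction I would compute $b(P) = \frac{|P|+L(P)}{2}-1$ directly from the data given in the definitions: for $E$ we have $|E|=1$, $L(E)=1$, so $b(E)=0$; for $B$, $|B|=4$, $L(B)=2$, so $b(B)=2$; for $D_3$, $|D_3|=5$, $L(D_3)=3$, so $b(D_3)=3$; for $Q$, $|Q|=7$, $L(Q)=3$, so $b(Q)=4$; for $R$, $|R|=10$, $L(R)=4$, so $b(R)=6$; for $S$, $|S|=7$, $L(S)=3$, so $b(S)=4$; and symmetrically $b(S')=4$. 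Then I would invoke Lemma \ref{dclemma2} (or equivalently the remark that the elements of a finite double chain form a subposet of the infinite double chain together with the counting in Theorem \ref{mainbound}): any collection of more than $2b(P)$ "middle" subsets — concretely, the subsets of $[n]$ with sizes in a window of width $b(P)+1$, which appear as a subposet of a double chain — must contain $P$. This shows $e(P) \le b(P)$. Actually the cleanest way to get $e(P)\le b(P)$ is: the window of sizes $k,\dots,k+m-1$ restricted to a single double chain gives $2m$ elements arranged as a sub-double-chain; if the full-height infinite double chain of $2m+1$ elements already forces $P$ — which is exactly the hypothesis of Lemma \ref{dclemma2} verified in the proof of Theorem \ref{mainbound} whenever $m \ge b(P)$ — then taking $m=b(P)+1$ shows that a width-$(b(P)+1)$ window is not $P$-free, hence $e(P)\le b(P)$.

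For the reverse inequality $e(P) \ge b(P)$, I must exhibit, for each $P$, a value $m = b(P)$ and show that the family of all subsets of $[n]$ of sizes $k, k+1, \ldots, k+m-1$ contains no copy of $P$, for every $n$ and $k$. The key observation is that in such a family every chain has length at most $m$, and moreover the "levels" (sets of a fixed size) have a very rigid structure: two sets of the same size are incomparable, and a set of size $i$ is below a set of size $j>i$ iff it is a subset. So I would analyze, for each candidate $P$, how many elements of $P$ can be placed on each of the $m$ consecutive size-levels and check that the containment constraints cannot all be met. For the level-structured posets $B, D_3, Q, R, S, S'$ this amounts to: $P$ has levels with $p_1, p_2, \ldots, p_t$ elements (where $t = L(P)$), and $b(P) = \frac{(p_1+\cdots+p_t)+t}{2}-1$; I claim one cannot embed $P$ into $b(P)$ consecutive size-levels. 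The combinatorial heart is an averaging/counting argument: if $P$ embeds into $b(P)$ consecutive levels of the Boolean lattice, then since $P$ has a chain of length $t$, the images of a maximal chain of $P$ occupy $t$ distinct levels among the $b(P)$ available, leaving only $b(P)-t$ "extra" levels, and the remaining $|P|-t$ non-chain elements must squeeze in; counting shows $|P|-t \le$ (number of elements that fit in the slack), which forces $2(b(P)) \le |P|+t-2$, i.e. $b(P)\le \frac{|P|+t}{2}-1$, contradicting $b(P) = \frac{|P|+L(P)}{2}-1$ with equality only in the impossible boundary case. I would make this precise poset-by-poset rather than in full generality, since the seven posets are small and each check is short.

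The main obstacle I anticipate is the reverse direction's rigidity argument: showing that $b(P)$ consecutive size-levels genuinely cannot host $P$ requires ruling out clever embeddings where several incomparable elements of $P$ land on the same level while the comparabilities are routed through intermediate levels. For instance with $R$ (levels $1,4,4,1$, so $b(R)=6$) I need that $R$ does not embed into any $6$ consecutive levels of $2^{[n]}$; the danger is an embedding that "spreads" the two middle antichains of size $4$ across more than two levels to create room. The resolution is that spreading an antichain across multiple levels is only possible if the elements were pairwise incomparable in $2^{[n]}$ too, but then the elements above and below them (the top and bottom singletons of $R$, which must contain resp. be contained in all four middle elements of their layer) pin down the levels tightly; a short case analysis on where the comparable "spine" elements go closes each case. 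Since Lemma \ref{lotofposets} only asserts the result for these seven explicit small posets, I would organize the write-up as seven short verifications, each computing $b(P)$ and then giving the one-paragraph embedding-obstruction argument, rather than attempting a unified theorem.
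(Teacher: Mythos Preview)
Your overall strategy---prove $e(P)\le b(P)$ and $e(P)\ge b(P)$ separately, the latter by a poset-by-poset check---is exactly the paper's approach. Two points deserve comment.

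First, for $e(P)\le b(P)$ the paper's argument is shorter than your double-chain restriction idea: assume $e(P)\ge b(P)+1$; then for $n\ge b(P)+1$ the family of all sets whose size lies in a window of width $b(P)+1$ is $P$-free and has cardinality $\Sigma(n,b(P)+1)>\Sigma(n,b(P))$, contradicting Theorem~\ref{mainbound}. Your route (find $P$ inside $2(b(P)+1)$ elements of a single double chain using Lemma~3.2) also works, but the cardinality contradiction avoids having to inspect the double-chain structure again.

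Second, your ``averaging/counting'' sketch for $e(P)\ge b(P)$ is not a valid argument: the conclusion you derive is $b(P)\le\frac{|P|+t}{2}-1$, which is exactly the definition of $b(P)$ and hence no contradiction at all. You are right to abandon it for case checks, but be aware that the individual cases are not just level-counting; several require a structural trick. The paper argues by \emph{reduction} to earlier cases: for $Q$ (with $b(Q)=4$), among seven sets in four consecutive levels at least four lie in the top two or bottom two levels and would form a $B$ in two consecutive levels, impossible; for $S$, one takes the intersection $V$ of the two top sets, observes $|V|\le k+2$, and finds a $D_3$ among the bottom element, three middle elements distinct from $V$, and $V$ itself in three consecutive levels; for $R$ (the hardest), one introduces the auxiliary set $U$ equal to the union of the five smallest sets and shows that at least three second-level sets are strictly below $U$ and three third-level sets strictly above it, forcing two disjoint $D_3$ configurations stacked between the bottom and top, which needs at least $3+3+1=7$ levels. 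Your ``spine-pinning'' heuristic for $R$ does not capture this; the key auxiliary object is the union $U$, not the placement of the extremal singletons.
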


\begin{proof}
$b(P)$ is an integer for all the above posets. Assume that $e(P)\geq b(P)+1$. Then for $n\geq b(P)+1$ there would be a $P$-free family $\mathcal{F}$ of subsets of $[n]$ with $|\mathcal{F}|=\Sigma(n,b(P)+1)>\Sigma(n,b(P))$, contradicting Theorem \ref{mainbound}. So $e(p)\leq b(p)$. We will show that for every poset $P\in \{E, B, D_3, Q, R, S, S'\}$ and integers $n,k$ the family formed by all subsets of $[n]$ of size $k, k+1, \dots , k+b(P)-1$ is $P$-free. It gives us $e(P)\geq b(P)$, and completes the proof.

The statement is trivial for $P=E$ since $b(E)=0$.

$b(B)=2$. The set of all subsets with $k$ and $k+1$ elements is $B$-free since two subsets of size $k+1$ can not have two different common subsets of size $k$.

$b(D_3)=3$. The set of all subsets with $k, k+1$ and $k+2$ elements is $D_3$-free since for two subsets $A,B$,  $|B|-|A|\leq 2$ there are at most two subsets $F$ satisfying $A\subset F\subset B$.

$b(Q)=4$. Assume that $Q$ is formed by 7 subsets of size $k, k+1, k+2$ or $k+3$. There are at least 4 subsets in the lower 2 or the upper 2 levels. They should form a $B$ poset, that is not possible.

$b(R)=6$. Assume that $R$ is formed by 10 subsets of size $k, k+1, \dots , k+5$. Let $A$ be the least, and $B$ be the greatest subset. Let $U$ be the union of the 5 smaller subsets. At least 3 subsets in the second level are different from $U$, and contained in it. Similarly, at least 3 subsets of the third level are different from $U$,  and contain it. Since $D_3$ is not formed by subsets of size $m, m+1$ and $m+2$, $|A|+6\leq |U|+3\leq |B|$, a contradiction.

$b(S)=4$. Assume that $S$ is formed by 7 subsets of size $k, k+1, k+2$ or $k+3$. Let $V$ be the intersection of the two elements of the top level, then $|V|\leq k+2$. $V$ contains all elements of the middle level, and is different from at least 3 of them. These 3 elements together with the least element and $V$ form a $D_3$ from subsets of size $k,k+1$ and $k+2$, and it is a contradiction.

A family is $S'$-free if and only if the family of the complements of its elements is $S$-free. It gives $e(S')=e(S)\geq b(S)=b(S')$. \qed
\end{proof}

We define two ways of building posets from smaller ones, keeping the property $e(P)=b(P)$.

\begin{definition}
Let $P_1, P_2$ posets. $P_1 \oplus P_2$ is the poset obtained by $P_1$ and $P_2$ adding the relations $a<b$ for all $a\in P_1, b\in P_2$.

Assume that $P_1$ has a greatest element and $P_2$ has a least element. $P_1 \otimes P_2$ is the poset obtained by identifying the greatest element of $P_1$ with the least element of $P_2$.
\end{definition}

\begin{lemma}\label{eadd}
$e(P_1 \oplus P_2) \geq e(P_1) + e(P_2) + 1$. If $P_1\otimes P_2$ is defined, then $e(P_1 \otimes P_2) \geq e(P_1) + e(P_2)$.
\end{lemma}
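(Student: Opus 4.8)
The plan is to prove both bounds by the same mechanism, working directly with the full-level families that realize $e$. For the ordinal sum, I would fix $n$ and $k$, set $m=e(P_1)+e(P_2)+1$, and show that the family $\mathcal{F}$ of all subsets of $[n]$ of sizes $k,k+1,\dots,k+m-1$ is $(P_1\oplus P_2)$-free; since $n,k$ are arbitrary this yields $e(P_1\oplus P_2)\ge m$. Assume for contradiction that $f\colon P_1\oplus P_2\to\mathcal{F}$ is an embedding. The key structural fact is that in $P_1\oplus P_2$ every element of $P_1$ lies below every element of $P_2$, so by injectivity $f(a)\subsetneq f(b)$, hence $|f(a)|<|f(b)|$, for all $a\in P_1$, $b\in P_2$. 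Writing $s=\max_{a\in P_1}|f(a)|$ and $t=\min_{b\in P_2}|f(b)|$, we get $s\le t-1$.

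Next I would split $f$ into its two halves. Because $P_1$ sits inside $P_1\oplus P_2$ as a subposet, $f|_{P_1}$ is an embedding of $P_1$ into the full-level family of all subsets of $[n]$ of sizes $k,\dots,s$, i.e.\ a family spanning $s-k+1$ consecutive levels. By definition of $e(P_1)$ a full-level family spanning at most $e(P_1)$ levels is $P_1$-free for every choice of parameters (one with fewer levels being a subfamily of an $e(P_1)$-level one), so the existence of this embedding forces $s-k+1\ge e(P_1)+1$, i.e.\ $s\ge k+e(P_1)$. Symmetrically, $f|_{P_2}$ embeds $P_2$ into the full-level family of sizes $t,\dots,k+m-1$, which spans $(k+m-1)-t+1$ levels, so $(k+m)-t\ge e(P_2)+1$, i.e.\ $t\le k+e(P_1)$. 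Then $k+e(P_1)\le s\le t-1\le k+e(P_1)-1$, a contradiction, and the first bound follows.

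For $P_1\otimes P_2$ I would run the identical argument with $m=e(P_1)+e(P_2)$, the role of the ``gap'' between the two halves now played by the single shared element: let $c=f(g)=f(\ell)$, where $g$ is the greatest element of $P_1$ and $\ell$ the least of $P_2$ (note that $g$ remains greatest in the copy of $P_1$ inside $P_1\otimes P_2$, and $\ell$ remains least in $P_2$). Then every $a\in P_1\setminus\{g\}$ has $f(a)\subsetneq c$ and every $b\in P_2\setminus\{\ell\}$ has $c\subsetneq f(b)$, so $f|_{P_1}$ embeds $P_1$ into the full-level family of sizes $k,\dots,|c|$ and $f|_{P_2}$ embeds $P_2$ into that of sizes $|c|,\dots,k+m-1$. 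As before these give $|c|\ge k+e(P_1)$ and $|c|\le k+e(P_1)-1$, a contradiction, whence $e(P_1\otimes P_2)\ge m$.

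I do not expect a serious obstacle; the two points to handle carefully are the ``monotonicity'' remark about $e$ (that $P$-freeness of an $e(P)$-level full-level family automatically passes to families with fewer levels, so that an embedding into a span of $j$ levels really does force $j\ge e(P)+1$) and the bookkeeping of level counts so that the two half-inequalities genuinely collide — in particular making sure it is the strict inclusion $f(a)\subsetneq f(b)$ (resp.\ $f(a)\subsetneq c\subsetneq f(b)$) coming from injectivity that separates the size ranges of the two halves by exactly the right amount.
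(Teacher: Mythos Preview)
Your proposal is correct and follows essentially the same approach as the paper's proof: both argue that in any embedding of $P_1\oplus P_2$ (resp.\ $P_1\otimes P_2$) into consecutive full levels, the images of $P_1$ must occupy at least $e(P_1)+1$ levels, those of $P_2$ at least $e(P_2)+1$ levels, and these ranges are disjoint (resp.\ share exactly one level), forcing at least $e(P_1)+e(P_2)+2$ (resp.\ $e(P_1)+e(P_2)+1$) levels in total. Your write-up simply makes explicit the bookkeeping (the quantities $s,t,|c|$, the monotonicity of $P$-freeness in the number of levels, and the strict inclusions coming from injectivity) that the paper leaves implicit in its two-sentence proof.
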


\begin{proof}
In order to find a $P_1$, we need at least $e(P_1)+1$ levels, for a $P_2$, we need at least $e(P_2)+1$ levels. It follows from the properties of $\oplus$ that the lowest level of $P_2$ is above the highest level of $P_1$ in any occurrence of $P_1\oplus P_2$, which thus needs at least $e(P_1)+1+e(P_2)+1$ levels. In the case of $P_1\otimes P_2$, the same reasoning applies, noting that highest level of $P_1$ and the lowest level of $P_2$ coincide. \qed
\end{proof}

\begin{lemma}\label{ops}
Assume that $P_1$ and $P_2$ are finite posets such that $e(P_1)=b(P_1)$ and $e(P_2)=b(P_2)$. Then
\begin{equation}
e(P_1\oplus P_2)=b(P_1\oplus P_2)
\end{equation}
Assume that $P_1$ has a greatest element and $P_2$ has a least element. Then
\begin{equation}
e(P_1\otimes P_2)=b(P_1\otimes P_2)
\end{equation}
\end{lemma}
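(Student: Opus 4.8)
The plan is to prove both equalities by combining the lower bounds from Lemma~\ref{eadd} with the general upper bound $e(P)\leq b(P)$ (valid whenever $b(P)$ is an integer and $n$ is large, exactly as used at the start of the proof of Lemma~\ref{lotofposets}), and then checking that the two bounds meet. So first I would compute $b(P_1\oplus P_2)$ and $b(P_1\otimes P_2)$ explicitly in terms of $b(P_1)$ and $b(P_2)$. For $\oplus$ we have $|P_1\oplus P_2|=|P_1|+|P_2|$ and $L(P_1\oplus P_2)=L(P_1)+L(P_2)$ (the longest chain is obtained by concatenating the longest chains, since every element of $P_1$ lies below every element of $P_2$), so
\[
b(P_1\oplus P_2)=\frac{|P_1|+|P_2|+L(P_1)+L(P_2)}{2}-1=b(P_1)+b(P_2)+1 .
\]
For $\otimes$ we have $|P_1\otimes P_2|=|P_1|+|P_2|-1$ (one element is identified) and $L(P_1\otimes P_2)=L(P_1)+L(P_2)-1$ (a longest chain through the identified vertex concatenates a longest chain of $P_1$ ending at its top with a longest chain of $P_2$ starting at its bottom; one must also check no longer chain avoids the glue point, which holds because any chain lies entirely in one of the two pieces otherwise), giving $b(P_1\otimes P_2)=b(P_1)+b(P_2)$.

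Next I would assemble the inequalities. Using $e(P_i)=b(P_i)$ and Lemma~\ref{eadd},
\[
e(P_1\oplus P_2)\geq e(P_1)+e(P_2)+1=b(P_1)+b(P_2)+1=b(P_1\oplus P_2),
\]
and similarly $e(P_1\otimes P_2)\geq b(P_1)+b(P_2)=b(P_1\otimes P_2)$. For the reverse inequality I would invoke exactly the argument from the first paragraph of the proof of Lemma~\ref{lotofposets}: since $b(P_1\oplus P_2)$ and $b(P_1\otimes P_2)$ are integers (being $b(P_1)+b(P_2)+1$ and $b(P_1)+b(P_2)$ with the summands integers), if $e$ exceeded $b$ for one of these posets then for large $n$ there would be a $P$-free family of size $\Sigma(n,b(P)+1)>\Sigma(n,b(P))$, contradicting Theorem~\ref{mainbound}. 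Hence $e\leq b$ in both cases, and combined with the lower bound we get equality.

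The only real content — and the step most likely to need care — is the computation $L(P_1\otimes P_2)=L(P_1)+L(P_2)-1$, i.e.\ verifying that gluing at the extreme elements does not accidentally create or destroy long chains: a chain in $P_1\otimes P_2$ that uses the identified vertex decomposes into a chain of $P_1$ below (or at) it and a chain of $P_2$ above (or at) it, so its length is at most $L(P_1)+L(P_2)-1$, and a chain avoiding the identified vertex lies wholly in $P_1$ or wholly in $P_2$ and so is even shorter; the matching lower bound comes from concatenating witnessing chains. The analogous (easier) facts for $\oplus$, and the integrality bookkeeping needed to apply Theorem~\ref{mainbound}, are routine. I would also remark that, once $e(P)=b(P)$ is established, Lemma~\ref{eb} immediately yields $La(n,P)=\Sigma(n,b(P))$ for these composite posets when $n$ is large, which is the point of the construction.
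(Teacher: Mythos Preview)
Your proposal is correct and follows essentially the same route as the paper: compute $|P_1\oplus P_2|$, $L(P_1\oplus P_2)$ (and the $\otimes$ analogues), plug into the definition of $b$, combine the lower bound from Lemma~\ref{eadd} with the hypothesis $e(P_i)=b(P_i)$, and finish with the general inequality $e(P)\le b(P)$ coming from Theorem~\ref{mainbound}. Your extra care in justifying $L(P_1\otimes P_2)=L(P_1)+L(P_2)-1$ and the integrality of the $b$-values is more detail than the paper gives, but the argument is the same.
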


\begin{proof}
Note that $|P_1 \oplus P_2|=|P_1|+|P_2|$, $L(P_1\oplus P_2)=L(P_1)+L(P_2)$, and $e(P_1\oplus P_2)\geq e(P_1)+e(P_2)+1$. Similarly, $|P_1 \otimes P_2|=|P_1|+|P_2|-1$, $L(P_1\otimes P_2)=L(P_1)+L(P_2)-1$, and $e(P_1\otimes P_2)\geq e(P_1)+e(P_2)$.

From the above equations and (\ref{bp}) we have
\begin{equation}
e(P_1 \oplus P_2)\geq e(P_1)+e(P_2)+1=b(P_1)+b(P_2)+1=b(P_1 \oplus P_2)
\end{equation}
and
\begin{equation}
e(P_1 \otimes P_2)\geq e(P_1)+e(P_2)=b(P_1)+b(P_2)=b(P_1 \otimes P_2)
\end{equation}
if $P_1$ has a greatest element and $P_2$ has a least element.
We have already seen that $e(P) \leq b(P)$ always holds. \qed
\end{proof}

\begin{figure}[h]
\begin{center}
\includegraphics[scale = 0.35] {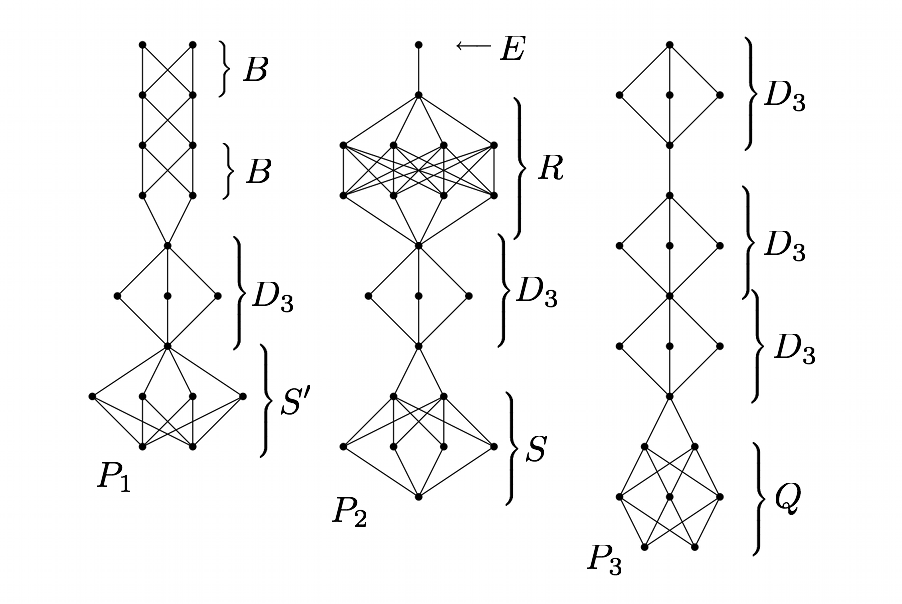}
\end{center}
\caption{Posets built from $E, B, D_3, Q, R, S$ and $S'$ using $\oplus$ and $\otimes$. $P_1 = S' \otimes D_3 \oplus B \oplus B$, $P_2 = S \oplus D_3 \otimes R \oplus E$ and $P_3 = Q \oplus D_3 \otimes D_3 \oplus D_3$.}
\label{fig5}
\end{figure}

The following theorem summarizes our results.

\begin{theorem}\label{main}
Let $P$ be a finite poset built from the posets $E, B, D_3, Q, R, S$ and $S'$ using the operations $\oplus$ and $\otimes$. (See figure \ref{fig5} for examples.) For $n\geq b(P)+1$
\begin{equation}
La(n,P)=\Sigma(n,b(P))=\Sigma(n, e(P))
\end{equation}
\end{theorem}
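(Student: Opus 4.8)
The plan is to reduce Theorem \ref{main} to Lemma \ref{eb} by proving that every poset $P$ in the described class satisfies $e(P) = b(P)$ with $b(P)$ an integer. The class is defined recursively — one starts from the generators $E, B, D_3, Q, R, S, S'$ and closes under $\oplus$ and $\otimes$ (the latter only when it is defined) — so the natural framework is structural induction on the construction tree of $P$. For the base case, Lemma \ref{lotofposets} gives $e(P) = b(P)$ for each of the seven generators, and one checks directly, as already observed in the proof of that lemma, that $b(P) \in \mathbb{Z}$ in each of these cases.

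For the inductive step, write $P = P_1 \oplus P_2$ or $P = P_1 \otimes P_2$, where the inductive hypothesis supplies $e(P_i) = b(P_i)$ and $b(P_i) \in \mathbb{Z}$ for $i = 1, 2$. When $P = P_1 \otimes P_2$, the recursion already guarantees that $P_1$ has a greatest element and $P_2$ a least element, so $P_1 \otimes P_2$ is defined and Lemma \ref{ops} applies; in either case Lemma \ref{ops} yields $e(P) = b(P)$ at once. Integrality of $b(P)$ follows from the identities $b(P_1 \oplus P_2) = b(P_1) + b(P_2) + 1$ and $b(P_1 \otimes P_2) = b(P_1) + b(P_2)$, which are immediate from $b(Q) = \frac{|Q| + L(Q)}{2} - 1$ together with the additivity of $|\cdot|$ and $L(\cdot)$ under $\oplus$ and $\otimes$ recorded in the proof of Lemma \ref{ops}. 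This closes the induction, so $e(P) = b(P)$ and $b(P) \in \mathbb{Z}$ for every $P$ in the class.

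To finish, fix such a $P$ and any $n \geq b(P) + 1$. Since $e(P) = b(P)$ and $n \geq b(P) + 1$, Lemma \ref{eb} gives $La(n,P) = \Sigma(n, e(P)) = \Sigma(n, b(P))$, which is exactly the statement to be proved.

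I do not expect a real obstacle: the theorem is essentially an assembly of the preceding lemmas. The two points that must not be skipped are that the precondition for $\otimes$ is carried along the whole construction (so every use of $\otimes$ is legitimate), and that $b(P)$ stays integral at every stage — the latter being needed so that Lemma \ref{eb}, via Theorem \ref{mainbound}, may use the sharp bound $\Sigma(n, b(P))$ rather than only $b(P)\,{n \choose \lfloor n/2 \rfloor}$. Both are routine but deserve an explicit mention.
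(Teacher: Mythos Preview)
Your proposal is correct and follows exactly the paper's approach: combine Lemma~\ref{lotofposets} (base case) with Lemma~\ref{ops} (inductive step) to get $e(P)=b(P)$, then invoke Lemma~\ref{eb}. Your extra care about integrality of $b(P)$ is harmless but in fact redundant, since $e(P)$ is an integer by definition and $e(P)=b(P)$ forces $b(P)\in\mathbb{Z}$ automatically.
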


\begin{proof}
From Lemma \ref{lotofposets} and Lemma \ref{ops} we have $e(P)=b(P)$. Then Lemma \ref{eb} proves the theorem. \qed
\end{proof}

\begin{remark}
Theorem \ref{main} is the generalization of the theorem of Erdős (Theorem \ref{erdospath}), and the following two results.
\end{remark}

\begin{theorem}
[De Bonis, Katona, Swanepoel]{\rm \cite{DKS}} for $n\geq 3$
\begin{equation}
La(n,B)=\Sigma(n,2)
\end{equation}
\end{theorem}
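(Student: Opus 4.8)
The plan is to recognize this as the special case $P=B$ of the theory developed above. Since $B$ is one of the seven base posets, Lemma~\ref{lotofposets} gives $e(B)=b(B)$, and by~(\ref{bp}) this common value is $\frac{|B|+L(B)}{2}-1=\frac{4+2}{2}-1=2$. As $n\ge 3=b(B)+1$, Lemma~\ref{eb} (or directly Theorem~\ref{main}) applies and yields $La(n,B)=\Sigma(n,e(B))=\Sigma(n,2)$, which is the claim.

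It is worth spelling out how the two inequalities arise. For $La(n,B)\ge\Sigma(n,2)$ one takes the family of all subsets of $[n]$ of size $\lfloor\frac{n-1}{2}\rfloor$ or $\lfloor\frac{n+1}{2}\rfloor$: it has $\Sigma(n,2)$ members, and it is $B$-free because two sets of the larger size cannot have two distinct common subsets of the smaller size (if $b_1\ne b_2$ are both $(k+1)$-sets then $|b_1\cap b_2|\le k$, so they have at most one $k$-subset in common). For $La(n,B)\le\Sigma(n,2)$ one runs the double chain method: the poset of any double chain $D$ under inclusion is a subposet of the infinite double chain, and since $|B|+L(B)-1=5$, the lemma used to prove Theorem~\ref{mainbound} shows that any five elements of the infinite double chain already contain $B$. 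Hence a $B$-free family $\mathcal F$ satisfies $|\mathcal F\cap D|\le 4$ for every double chain, so $\sum_{D\in\mathcal D}|\mathcal F\cap D|\le 4\cdot n!=2\cdot 2\cdot n!$, and Lemma~\ref{dclamma1} with $m=2\le n-1$ gives $|\mathcal F|\le\Sigma(n,2)$.

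There is no genuine obstacle left once the general results are available; all the substance lies in the statement that every five elements of the infinite double chain contain $B$, which has already been proved in full generality. If one instead wanted an argument using only the notion of a double chain, the key point to check is exactly $|\mathcal F\cap D|\le 4$ for a $B$-free $\mathcal F$: a chain of length $4$ already contains $B$ (take $a_1<a_2<b_1<b_2$), so it suffices to verify that a $5$-element subset of the infinite double chain with no $4$-chain still contains $B$, a short finite case analysis using the fact that in the linear arrangement $\dots,L_i,M_i,L_{i+1},M_{i+1},\dots$ the only incomparable pairs are $\{L_i,M_i\}$ and $\{M_i,M_{i+1}\}$. Finally, the hypothesis $n\ge 3$ is precisely the condition $m\le n-1$ of Lemma~\ref{dclamma1} that is needed to obtain the sharp bound $\Sigma(n,2)$ rather than the weaker $2{n\choose\lfloor n/2\rfloor}$.
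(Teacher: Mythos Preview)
Your proposal is correct and matches the paper's approach exactly: the paper does not give an independent proof of the De Bonis--Katona--Swanepoel theorem but simply remarks that Theorem~\ref{main} generalizes it, i.e.\ that it is the special case $P=B$, which is precisely what you carry out. Your additional explanation of the two inequalities and the role of the hypothesis $n\ge 3$ is accurate elaboration beyond what the paper writes.
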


\begin{theorem}
[Griggs, Li, Lu] {\rm(Special case of Theorem 2.5 in \cite{GLL})} for $n\geq 2$
\begin{equation}
La(n,D_3)=\Sigma(n,3)
\end{equation}
\end{theorem}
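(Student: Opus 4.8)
The plan is to obtain this statement as the special case $P = D_3$ of Theorem \ref{main}. The $3$-diamond $D_3$ is itself one of the seven base posets, hence (using no $\oplus$ or $\otimes$ operations at all) it is of the type to which Theorem \ref{main} applies, so I would first just compute the relevant parameter. Since $D_3$ has $|D_3| = 5$ elements on three levels of sizes $1,3,1$, and its longest chain (a bottom element, one middle element, the top element) has $L(D_3) = 3$ elements, we get $b(D_3) = \frac{|D_3| + L(D_3)}{2} - 1 = \frac{5+3}{2} - 1 = 3$. Theorem \ref{main} then yields $La(n, D_3) = \Sigma(n, b(D_3)) = \Sigma(n, 3)$ for every $n \geq b(D_3) + 1 = 4$.

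It remains to treat the two values $n = 2$ and $n = 3$ that lie below the range of Theorem \ref{main}, which I would do by hand. For $n = 2$ there are only four subsets of $[2]$, whereas embedding $D_3$ needs five distinct sets, so every family is $D_3$-free and $La(2, D_3) = 4 = \Sigma(2,3)$. For $n = 3$: the family of all seven subsets of $[3]$ other than $\emptyset$ is $D_3$-free, because a copy of $D_3$ would require sets $A \subsetneq F_1, F_2, F_3 \subsetneq B$ with $F_1, F_2, F_3$ distinct, and among subsets of $[3]$ this is only possible with $A = \emptyset$ and $B = [3]$; meanwhile the full power set does contain $D_3$ (for instance $\emptyset \subset \{1\}, \{2\}, \{3\} \subset \{1,2,3\}$), so $La(3, D_3) = 7 = \Sigma(3,3)$.

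I do not expect any real obstacle here: all the substance is already packaged in Lemma \ref{lotofposets} (which gives $e(D_3) = b(D_3)$ through the double-chain argument) and in the upper bound of Theorem \ref{mainbound}, so one only has to check the arithmetic $b(D_3) = 3$, match $\Sigma(n, e(D_3))$ with $\Sigma(n,3)$, and dispose of the two small cases above. The original theorem of Griggs, Li and Lu covers the whole range $n \geq 2$ on its own, so for the historical record one can also simply refer to \cite{GLL}.
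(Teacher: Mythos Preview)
Your argument is correct. Note, however, that the paper does not actually supply its own proof of this statement: it is quoted as a known theorem of Griggs, Li and Lu, and the surrounding remark only points out that Theorem~\ref{main} generalises it. So there is no detailed proof in the paper to compare against; the paper's ``proof'' is simply the citation to \cite{GLL}, together with the observation that $D_3$ is one of the base posets in Theorem~\ref{main}.

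What you do goes a step further than what the paper spells out: Theorem~\ref{main} only yields $La(n,D_3)=\Sigma(n,3)$ for $n\ge b(D_3)+1=4$, and you correctly fill in the remaining cases $n=2$ and $n=3$ by direct inspection. Your computations there are fine (in particular $\Sigma(2,3)=4$ and $\Sigma(3,3)=7$, and your argument that removing $\emptyset$ from $2^{[3]}$ kills every copy of $D_3$ is valid since any embedding forces the bottom set to be $\emptyset$). This is a clean, self-contained derivation of the full range $n\ge 2$ from the paper's machinery, which the paper itself does not write out.
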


\end{document}